\numberwithin{equation}{section}
\renewcommand{\i}{\mathrm{i}}
\renewcommand{\vec}[1]{\ensuremath{\mathchoice
                    {\mbox{\boldmath$\displaystyle#1$}}
                    {\mbox{\boldmath$\textstyle#1$}}
                    {\mbox{\boldmath$\scriptstyle#1$}}
                    {\mbox{\boldmath$\scriptscriptstyle#1$}}}}%
\theoremstyle{plain}
\newtheorem{theorem}[equation]{\bf Theorem}
\newtheorem{lemma}[equation]{\bf Lemma}
\theoremstyle{definition}
\newtheorem{definition}[equation]{\bf Definition}
\theoremstyle{remark}
\newtheorem{remark}[equation]{\bf Remark}
\newcommand{\llg}{\langle\hskip -1.5pt\langle}
\newcommand{\rrg}{\rangle\hskip -1.5pt\rangle}
\begin{document}

\title{Some New Addition Formulae for Weierstrass Elliptic Functions}

\author{J. Chris Eilbeck$^{1}$, Matthew England$^{2}$ and Yoshihiro
  \^Onishi$^{3}$}

\footnote{
Department of Mathematics and the Maxwell Institute for Mathematical Sciences, Heriot-Watt University, 
Edinburgh, EH14 4AS, United Kingdom\\
\hspace*{0.15in} $^{2}$Department of Computer Science, University of Bath, 
Bath, BA2 7AY, United Kingdom\\
\hspace*{0.15in} $^{3}$Department of Mathematics, Faculty of Sciences and Technology, Meijo University, 
1-501~\,Shiogamaguchi, Tenpaku-ku, Nagoya, 468-8502, Japan}

\begin{abstract}
We present new addition formulae for the Weierstrass functions associated with a general elliptic curve.  
We prove the structure of the formulae in $n$-variables and give the explicit addition formulae for the $2$- and $3$-variable cases.  
These new results were inspired by new addition formulae found in the case of an equianharmonic curve, 
which we can now observe as a specialisation of the results here.  
The new formulae, and the techniques used to find them,  
also follow the recent work for the generalisation of Weierstrass' functions to curves of higher genus.
\end{abstract}

\maketitle

\section{Introduction} 
\label{SEC:Intro}

This paper concerns new addition formulae for the Weierstrass functions associated with 
the general elliptic curve \(f(x,y)=0\) with
\[
f(x,y)=y^2+(\mu_1x+\mu_3)y-(x^3+\mu_2x^2+\mu_4x+\mu_6). 
\]
We describe a new class of formulae in Theorem \ref{main_gen} and derive explicit examples in Theorems \ref{main_thm} and \ref{main_thm3}. 

Our work follows both classical results for the Weierstrass elliptic curve, and recent work for the equianharmonic case (as well as specialised higher genus curves).  
We summarise these results respectively in Subsections \ref{SUBSEC:Weierstrass} and \ref{SUBSEC:Specialised} 
before giving our inspiration and motivation in Subsection \ref{SUBSEC:Motivation}.

\subsection{The Weierstrass elliptic curve}
\label{SUBSEC:Weierstrass}

\noindent Consider the Weierstrass equation
\begin{equation}
\label{weier_ellipt}
{\wp'(u)}^2 = 4\wp(u)^3 - g_2\wp(u) - g_3, 
\end{equation}
where $g_2$ and $g_3$ are the elliptic invariants and $\wp(u)=-\frac{d^2}{du^2}\log\sigma(u)$, 
$\sigma(u)$ the famous functions of Weierstrass (see for example Chapter 20 of \cite{ww}).  
There is an especially well-known addition formula (see for instance p.451 of \cite{ww})
\begin{equation}
\label{2term_fs}
- \frac{\sigma(u+v)\sigma(u-v)}{\sigma(u)^2\sigma(v)^2}
=\wp(u)-\wp(v).
\end{equation}
Also, for $n$ variables 
$u^{(j)}, j=1 \dots n$, 
it is known that
\begin{equation}
\label{det_formula}
\begin{aligned}
&(-1)^{(n-1)(n-2)/2}\frac{\sigma(\sum_{i=1}^nu^{(i)})
\prod_{i<j}\sigma(u^{(i)}-u^{(j)})}
{\prod_{j}\sigma(u^{(j)})^n} \\
&\quad =\frac{1}{\prod\limits_{j=1}^{n-1}j!}
\left|\,
\begin{matrix}
1      & \wp(u^{(1)}) & \wp'(u^{(1)}) & \cdots & \wp^{(n-2)}(u^{(1)}) \\
1      & \wp(u^{(2)}) & \wp'(u^{(2)}) & \cdots & \wp^{(n-2)}(u^{(2)}) \\
\vdots & \vdots       & \vdots        & \ddots & \vdots             \\
1      & \wp(u^{(n)}) & \wp'(u^{(n)}) & \cdots & \wp^{(n-2)}(u^{(n)})
\end{matrix}\,
\right|.
\end{aligned}
\end{equation}
This and other addition formulae may be found on p.458 of \cite{ww}, for example.  
These formulae are a reflection of the involution of the elliptic curve defined by (\ref{weier_ellipt}):
\begin{equation}\label{alg_eq}
 \mathscr{C}\,:\, y^2=x^3-\tfrac{g_2}{4}x-\tfrac{g_3}{4}, \qquad (y=\tfrac12\wp'(u), \  x=\wp(u)).
\end{equation}

\subsection{Specialised curves}
\label{SUBSEC:Specialised}

The equianharmonic case is when the elliptic invariant $g_2=0$ (and
$g_3$ is assumed non-zero).  In this case there is a three-term
analogy of equation (\ref{2term_fs}) which reflects the cyclic
automorphism group of order three.  Let $\zeta$ be a primitive cube
root of unity (without loss of generality we may take
$\zeta=(-1+\sqrt{-3})/2$).  Then the Weierstrass functions specialised
to this case satisfy
\begin{equation}\label{emo_fs}
 - \frac{\sigma(u+v)\sigma(u+{\zeta}v)\sigma(u+{\zeta}^2v)}
  {\sigma(u)^3\sigma(v)^3}
  =\frac12 \left(\wp'(u)+\wp'(v)\right).  
\end{equation}
This was derived recently as Proposition 5.1 of \cite{emo}. 

The authors have derived similar formulae for specialised higher genus
curves and functions. Generalisations of Weierstrass' functions may be
defined following the work of Klein (see for example \cite{BEL97,
  EEL00}) which satisfy formulae generalising equations
(\ref{weier_ellipt}) and (\ref{2term_fs}).  In the case of trigonal
curves, the authors found that further addition formula after making
specialisations of the curve parameters in analogy with the
equianharmonic case.  Results for genus three were given in Theorem
10.1 of \cite{eemop} and Theorem 5.4 in \cite{eeo11}), and for genus
four in Theorem 8 in \cite{England11}.

\subsection{Aim and motivation}
\label{SUBSEC:Motivation}

The aim of this paper is to introduce generalisations of (\ref{det_formula}) and (\ref{2term_fs}), 
which are both beyond (\ref{emo_fs}) and for 
the most general elliptic curve (\ref{ellipt_curve}) rather than a specialised curve.

Our approach is inspired by the following observation.
Let us take a map 
\begin{equation}\label{coverings}
  \varphi : \mathscr{C}\longrightarrow \mathbb{P}^1,
\end{equation}
where \(\mathbb{P}^1\) denotes the projective line.  For technical
reasons, we assume \(\varphi\) is a polynomial of \(x\) and \(y\).  We
regard \(\mathscr{C}\) as a complex torus \(\mathbb{C}/\Lambda\) and
\(\varphi\) as a function on \(\mathbb{C}\) with the set of periods
\(\Lambda\).  Let \(u\) be a variable on \(\mathbb{C}\) and let us
take the set of variable points \(S=\varphi^{-1}(\varphi(u))\).  If
\(\varphi=x\), then \(S=\{u, -u\}\) and this choice gives rise to
(\ref{2term_fs}) and (\ref{det_formula}).  If \(\varphi=y\) and
\(\mu_1=\mu_2=\mu_3=\mu_4=0\), then \(S=\{u,\zeta{u},\zeta^2{u}\}\)
and this choice gives rise to (\ref{emo_fs}), where \(\zeta\) is a
primitive cube root of unity.  It is natural to investigate the
generalisations of these formulae for arbitrary \(\varphi\), and the
present paper is a first attempt at this problem.  


We define the objects we work with formally in Sections
\ref{SEC:Preliminaries} and \ref{SEC:conjpoints}.  Then in Section
\ref{SEC:AFN} we describe the structure of a new class of addition
formulae in $n$ arbitrary variables, with explicit expressions derived
for the cases \(n=2\) and \(n=3\) in Sections \ref{SEC:AF2} and
\ref{SEC:AF3} respectively.  Finally in Section \ref{SEC:Final} we
describe some possibilities for extending these ideas.


Our personal interest in such formulae stems from their beauty,
but they may also have important applications.  The classical theory
of the elliptic functions has of course been widely applied (see for
example \cite{AE06} and \cite{Lawden80} for details on applications to
geometry, algebra, arithmetic, mechanics, statistics).  The
Weierstrass functions in particular give solutions to many systems,
including the spherical pendulum, the spinning top and the KdV
equation for water waves.

The addition formula of the functions are algebraic analogues of the
well known addition law for points on the elliptic curve, fundamental
to elliptic curve cryptography.  The addition formula can be
particularly important in number theory (see for example
\cite{Lang78}).  More recently, in \cite{GV12}, null geodesics in
Schwarzschild spacetime were described by the Weierstrass
$\wp$-function and the addition formula used to connect the values of
radial distance at different points on the geodesic.

Also, the recent work on the generalisation of these functions to
higher genus curves has begun to find applications, including:
describing the double pendulum \cite{EPR03}; solutions to systems in
the KP hierarchy (see for example \cite{BEL97, EE09}); reductions of
the Benney equations (see for example \cite{BG03, EG09}); and
describing geodesics in black hole space times (see for example
\cite{EHKKL11, EHKKL12}).

\section{Preliminaries} 
\label{SEC:Preliminaries} 

\noindent  
The reader is referred to \cite{o} for more details of the material in
this section.  Define
\begin{equation}\label{ellipt_curve}
  f(x,y)=y^2+(\mu_1x+\mu_3)y-(x^3+\mu_2x^2+\mu_4x+\mu_6). 
\end{equation}
We consider the general elliptic curve $\mathscr{C}$ defined by
$f(x,y)=0$ with the unique point $\infty$ at infinity.  Although we
assume $\mathscr{C}$ is non-singular, the formulae in our theorems are
valid even if this is not the case.  It is known that any elliptic
curve over any perfect field is written in this form (see Chapter 8 of
\cite{cassels}, Chapter 3.3 of \cite{silverman}).
Many of the results for this curve are valid as identities on
power series over quite general base rings.  In this paper we
henceforth work over $\mathbb{C}$.

We may define weights, denoted \(\mathrm{wt}\), by
\begin{equation*}
  \mathrm{wt}(x)=-2, \ \ 
  \mathrm{wt}(y)=-3, \ \ 
  \mathrm{wt}(\mu_j)=-j.
\end{equation*}
From this definition, it is possible to deduce a weight for every
object in the paper such that every formula in the paper is of
homogeneous weight.  In general a numerical subscript throughout this
paper will refer to the corresponding (negative) weight, except for
the classical constants $g_2$ and $g_3$, which have weight $-4$ and
$-6$ respectively.

Any differential of the first kind is a constant multiple of
\begin{equation*}
  \omega=\omega(x,y)=\frac{dx}{f_y(x,y)}=\frac{dx}{2y+(\mu_1x+\mu_3)}
  =-\frac{dy}{f_x(x,y)},
\end{equation*}
where \(f_y\) and \(f_x\) denote \(\frac{\partial}{\partial y}f\) and
\(\frac{\partial}{\partial x}f\) respectively.  Let \(\Lambda\) denote
the lattice consisting of the integrals of this differential along any
closed path:
\begin{equation*}
\Lambda=\bigg\{\oint\omega\bigg\}.
\end{equation*}
We define two meromorphic functions \(x(u)\) and \(y(u)\) by the set of equalities
\begin{equation}\label{xy}
u=\int_{\infty}^{(x(u),y(u))}\!\!\!\omega, \quad f\big(x(u),y(u)\big)=0.
\end{equation}
Clearly, these are periodic with respect to \(\Lambda\) and have poles only at the points in \(\Lambda\).  
Note that it follows from these definitions that the variable \(u\) is of weight 1: \(\mathrm{wt}(u)=1\).

From the definitions in (\ref{xy}) we have
\begin{equation*}
x(-u)=x(u), \quad y(-u)=y(u)+\mu_1x(u)+\mu_3.
\end{equation*}
Both \(x(u)\) and \(y(u)\) have a pole only at \(u=0\), of order 2 and 3 respectively. 

Let us take a local parameter \(t\) around the point \(\infty\) satisfying
\begin{equation} \label{tdef}
y=\frac1{t^3}.
\end{equation}
This choice of a local parameter is different from the usual one: $t=-x/y$. 
Using (\ref{tdef}) and (\ref{xy}), we can obtain the power series expansions of \(x(u)\) and \(y(u)\) beginning with
\begin{equation}\label{2.024}
  \begin{aligned}
    x(u)&=u^{-2}-(\tfrac{1}{12}{\mu_1}^2 +\tfrac13{\mu_2}) \\
    &\qquad +(\tfrac1{240}{\mu_1}^4 +\tfrac{1}{30}{\mu_2}{\mu_1}^2
    -\tfrac1{10}{\mu_3}{\mu_1}
    +\tfrac1{15}{\mu_2}^2 -\tfrac15{\mu_4})u^2+\cdots, \\
    y(u)&=-u^{-3}-\tfrac12{\mu_1}u^{-2}+(\tfrac1{24}{\mu_1}^3
    +\tfrac16{\mu_2}{\mu_1}-\tfrac12{\mu_3})+\cdots.
  \end{aligned}
\end{equation}
For two variable points \((x,y)\) and \((z,w)\) on  \(\mathscr{C}\),  we define
\begin{equation*}
  \Omega(x,y,z,w)
  =\frac{(y+w+\mu_1z+\mu_3)dx}{(x-z)(2y+\mu_1x+\mu_3)}.
\end{equation*}
This has a pole of order 1 with residue 1 at \((z,w)\) when regarded as a form with variable \((x,y)\) for a fixed \((z,w)\).  
Indeed, since \((2w+\mu_1z+\mu_3)=f_y(z,w)\) when \((x,y)=(z,w)\), the residue at \((z,w)\) is 1, and the zeroes of the numerator and denominator at \((x,y)=(z,-w-\mu_1z-\mu_3)\) cancel.

For a differential \(\eta\) of the 2nd kind with pole only at
\(\infty\), we take
\begin{equation*}
  \vec{\xi}(x,y;z,w) =\tfrac{d}{dz}\Omega(x,y;z,w)dz 
  -\omega(x,y)\eta(z,w),
\end{equation*}
where \((x,y)\), \((z,w)\in\mathscr{C}\). 
Then the differential of the second kind \(\eta\) is 
chosen so that it satisfies
\begin{equation*}
\vec{\xi}(x,y;z,w)=\vec{\xi}(z,w;x,y). 
\end{equation*}
Such choice of a differential form \(\eta\) is not unique. 
In this paper, we chose 
\begin{equation}\label{1.49}
\eta(x,y)=\frac{-xdx}{2y+\mu_1x+\mu_3}
\end{equation}
(see \cite{o}).
We fix the notation \(\eta\) for the form (\ref{1.49}) from now on. 
Let \(\alpha\) and \(\beta\) be a pair of closed paths on \(\mathscr{C}\) 
which represent a symplectic basis of the homology group \(H_1(\mathscr{C},\mathbb{Z})\).  
We let \(\omega'\) and \(\omega''\) be periods of \(\omega\) with respect to the closed paths \(\alpha\) and \(\beta\).  
Similarly, let \(\eta'\) and \(\eta''\) be periods of \(\eta\) with respect to \(\alpha\) and \(\beta\).  
In general, for a given \(v\in\mathbb{C}\), we denote by \(v'\) and \(v''\) the real numbers such that
\begin{equation*}
v=v'\omega'+v''\omega''.   
\end{equation*}
\begin{definition}
\label{def:gensig}
We define the \emph{sigma function} by
\begin{equation*}
  \sigma(u)=u\,\exp\bigg\{{-}\int_0^u\int_0^u\bigg(x(u)-\frac{1}{u^2}\bigg)du\bigg\}. 
\end{equation*}
The integrals and the exponential should be regarded as operations for power series. 
We can also express \(\sigma(u)\) analytically with $\theta$-functions as 
\begin{equation*}
  \sigma(u)={\eta_{\mathrm{\,Ded}}({\omega'}^{-1}\omega'')}^{-3}
  {\cdot}\frac{\,\omega'}{\,2\pi}{\cdot}
  \exp\big(-\tfrac12 u^2\eta'{\omega'}^{-1}\big)
  \vartheta\bigg[\begin{matrix}\tfrac12 \\\tfrac12\end{matrix}
  \bigg]({\omega'}^{-1}u\big|{\omega'}^{-1}\omega''),
\end{equation*}
where \(\eta_{\mathrm{\,Ded}}(\tau)=e^{\frac{\pi\i\tau}{12}}\prod_{n=1}^{\infty}(1-e^{2\pi\i\tau{n}})\) 
is Dedekind's eta function (see \cite{o}). 
\end{definition}
It is known that the \(\sigma\)-function does not depend on the choice
of symplectic basis \(\alpha\) and \(\beta\) of
\(H_1(\mathscr{C},\mathbb{Z})\), and it can be easily checked that
\begin{equation}\label{oddness}
\sigma(-u)=-\sigma(u).
\end{equation}
Let
\begin{equation*}
L(u,v)=u(v'\eta'+v''\eta'')
\end{equation*}
for \(u\) and \(v\in\mathbb{C}\), and 
$\chi(\ell) = {\rm exp}(2\pi{\rm i}(\tfrac{1}{2}\ell' - \tfrac{1}{2}\ell'' + \tfrac{1}{2}\ell'\ell'')).$
Then the $\sigma$-function has the following quasi-periodicity property.
\begin{lemma}[Lemma 2.6 in \cite{o}]
\label{translation} 
The $\sigma$-function satisfies
\begin{equation}\label{translational}
  \sigma(u+\ell)=\chi(\ell)\sigma(u)\exp L(u+\tfrac12\ell,\ell) 
  \ \ \ (\ell\in\Lambda).
\end{equation}
\end{lemma}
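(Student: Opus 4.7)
The plan is to derive (\ref{translational}) by substituting $u+\ell$ into the definition (\ref{2.06}) and tracking how each factor transforms. The prefactor $\eta_{\mathrm{Ded}}({\omega'}^{-1}\omega'')^{-3}\cdot\omega'/(2\pi)$ is independent of $u$ and so unchanged. An elementary expansion of the quadratic exponent gives
\[
 \exp\!\big(-\tfrac12(u+\ell)^2\eta'{\omega'}^{-1}\big)
 =\exp\!\big(-\tfrac12 u^2\eta'{\omega'}^{-1}\big)\cdot
  \exp\!\big(-(u+\tfrac12\ell)\,\ell\,\eta'{\omega'}^{-1}\big),
\]
so the new $u$-dependence splits off as a single exponential linear in $u$.

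For the theta factor I would invoke the standard quasi-periodicity of the odd Jacobi theta function,
\[
 \vartheta\!\left[\begin{smallmatrix}1/2\\1/2\end{smallmatrix}\right]\!(z+m+n\tau\mid\tau)
 =(-1)^{m+n}\exp\!\big(-\pi i\,n^2\tau-2\pi i\,n\,z\big)\,
  \vartheta\!\left[\begin{smallmatrix}1/2\\1/2\end{smallmatrix}\right]\!(z\mid\tau),
\]
with $\tau={\omega'}^{-1}\omega''$. Writing a given $\ell\in\Lambda$ in terms of the lattice generators $\omega',\omega''$ supplies integer coefficients $(m,n)$ (in the normalisation used in \cite{o}), and the substitution $z={\omega'}^{-1}u$ converts $z\mapsto z+m+n\tau$ into $u\mapsto u+\ell$. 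This produces a sign $(-1)^{m+n}$, to be identified with $\chi(\ell)$, together with further exponential factors linear in $u$ and quadratic in $n$.

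The crux is then to assemble the exponential contributions from these two sources into the claimed bilinear expression $L(u+\tfrac12\ell,\ell)=(u+\tfrac12\ell)(\ell'\eta'+\ell''\eta'')$. Combining the cross term $-u\ell\,\eta'{\omega'}^{-1}$ with the theta-shift term $-2\pi i\,n\,{\omega'}^{-1}u$, and the two quadratic-in-$\ell$ pieces similarly, requires precisely the Legendre relation $\omega'\eta''-\omega''\eta'=\tfrac12\pi i$ (in the convention of the paper). This identity is the essential non-trivial input: once applied, a direct comparison of exponents yields the claimed formula, and $\chi$ is exhibited as a $\pm1$-valued character on $\Lambda$, consistent with the oddness (\ref{oddness}). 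The main obstacle is thus the bookkeeping — matching the normalisations between the integer lattice of the theta function and the period lattice $\Lambda$, and keeping careful track of signs when Legendre's relation is applied to collapse the $2\pi i$-contributions into $\eta'$ and $\eta''$ terms.
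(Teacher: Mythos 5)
The paper itself gives no proof of this lemma: it is quoted as a known property of the function defined in (\ref{2.06}), with the reader referred to \cite{o}. Your route --- substituting $u+\ell$ into (\ref{2.06}), splitting the Gaussian exponent as $-\tfrac12(u+\ell)^2\eta'{\omega'}^{-1}=-\tfrac12u^2\eta'{\omega'}^{-1}-(u+\tfrac12\ell)\ell\,\eta'{\omega'}^{-1}$, applying the quasi-periodicity of the odd theta function, and using Legendre's relation to convert the residual $2\pi i$ contributions into $\eta'$, $\eta''$ terms --- is the standard derivation and is sound in outline.

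Two of the constants in your bookkeeping are wrong, though, and since you yourself identify the bookkeeping as the crux they should be fixed. First, the character is not $(-1)^{m+n}$. The theta shift contributes the exponent $-\pi i n^2\tau-2\pi i n\,{\omega'}^{-1}u=-2\pi i n\,{\omega'}^{-1}\big(u+\tfrac12 n\omega''\big)$, and rewriting $u+\tfrac12 n\omega''=(u+\tfrac12\ell)-\tfrac12 m\omega'$ so as to match the argument $u+\tfrac12\ell$ of $L$ forces out an extra constant $e^{\pi i mn}=(-1)^{mn}$. Hence $\chi(\ell)=(-1)^{m+n+mn}$; this differs from $(-1)^{m+n}$ already for $\ell=\omega'+\omega''$, and it is exactly what makes $\chi(\ell)=1$ if and only if $\ell\in2\Lambda$. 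Second, with the full periods $\omega'=\oint_\alpha\omega$, $\eta'=\oint_\alpha\eta$ used in this paper, Legendre's relation reads $\eta'\omega''-\eta''\omega'=\pm2\pi i$; the value $\tfrac12\pi i$ you quote is the half-period normalisation, and carrying it through would leave the linear and quadratic exponents mismatched by a factor of $4$. With these two corrections (and the sign conventions of \cite{o} for $\eta'$, $\eta''$) the comparison of exponents closes as you describe.
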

\noindent Let ${\overline{\mu}_1} = \mu_1/2$.  Then the \(\sigma\)-function may be represented by a series expansion starting with
\begin{equation}\label{2.10}
\begin{aligned}
  \sigma(u)&=u +({\overline{\mu}_1}^2 + \mu_2)(\tfrac{1}{3!})u^3 \\
  &\qquad +({\overline{\mu}_1}^4 + 2{\mu_2}{\overline{\mu}_1}^2
  + \mu_3\mu_1 + {\mu_2}^2 + 2\mu_4)(\tfrac{1}{5!})u^5 \\
  &\qquad + ({\overline{\mu}_1}^6 + 3{\mu_2}{\overline{\mu}_1}^4
  + 6\mu_3{\overline{\mu}_1}^3 + 3{\mu_2}^2{\overline{\mu}_1}^2
  + 6\mu_4{\overline{\mu}_1}^2
  \\  &\qquad
  + 6\mu_3\mu_2{\overline{\mu}_1} + {\mu_2}^3 + 6\mu_4\mu_2
  + 6{\mu_3}^2 + 24\mu_6)(\tfrac{1}{7!})u^7  +\cdots,
\end{aligned}
\end{equation}
For simplicity, we use 
\(\mathbb{Z}[\mu_1,\mu_2,\mu_3,\mu_4,\mu_6]=\mathbb{Z}[\vec{\mu}]\),
\(\mathbb{Q}[\mu_1,\mu_2,\mu_3,\mu_4,\mu_6]=\mathbb{Q}[\vec{\mu}]\), 
and
\(\mathbb{Z}[\overline{\mu}_1,\mu_2,\mu_3,\mu_4,\mu_6] = 
\mathbb{Z}[\vec{\mu}']\).
\par
For a commutative ring \(R\), we denote by \(R\llg{z}\rrg\) the ring
\begin{equation*}
  \bigg\{\,\sum_{j=0}^{\infty}a_j\dfrac{z^j}{j!}\,\Big|\,a_j\in R\,\bigg\}. 
\end{equation*}
Each element of this ring is said to be {\it Hurwitz integral} over \(R\). 
\begin{remark}[Hurwitz integrality]
The expansion (\ref{2.10}) is Hurwitz integral over \(\mathbb{Z}[\vec{\mu}']\):
\begin{equation}\label{hurwitz1}
  \sigma(u)\in\mathbb{Z}[\vec{\mu}']\llg{u}\rrg.
\end{equation}
However, it is also known that 
\begin{equation}\label{hurwitz2}
  \sigma(u)^2\in\mathbb{Z}[\vec{\mu}]\llg{u}\rrg. 
\end{equation}
The reader is referred to the discussion in \cite{o}.  This
integrality of the coefficients of this expansion is 
implicitly taken up in Remark \ref{int_coeff} later.  In this paper, we need only
the fact that \(A_n\in\mathbb{Q}[\vec{\mu}]\).
\end{remark}
\begin{definition}
\label{def:wp}
We now define as usual the elliptic functions
\begin{equation}\label{wp}
\wp(u)=-\frac{d^2}{du^2}\log\sigma(u), \ \ \ 
\wp'(u)=\frac{d}{du}\wp(u). 
\end{equation}
\end{definition}
These are periodic for any period $\ell \in \Lambda$ by Lemma
\ref{translation}.  Also, by (\ref{oddness}), we have
\begin{equation}\label{parity}
\wp(-u)=\wp(u) \ \ \ \mbox{and}\ \ \ \wp'(-u)=-\wp'(u).
\end{equation}

This \(\wp(u)\) for the general curve (\ref{ellipt_curve}) is slightly
different from the work of Weierstrass for (\ref{alg_eq}).  Our
$\wp(u)$ has the expansion
\begin{equation*}
  \wp(u)=\frac1{u^2}
  +\sum_{\ell\in\Lambda,\ell\neq0}
  \Big(\frac1{(u-\ell)^2}-\frac1{\ell^2}\Big)-\frac{{\mu_1}^2+4\mu_2}{12},
\end{equation*}
which is shown by the positions of the zeroes of \(\sigma(u)\).
Comparing the power series expansions in (\ref{2.024}) and the
essential part of the expansion of \(\wp(u)\) with respect to \(u\)
obtained by (\ref{2.10}), we have
\begin{equation}\label{P_and_xy}
\wp(u)=x(u),  \ \ \ \mbox{and}\ \ \     
\wp'(u)=2y(u)+\mu_1x(u)+\mu_3.
\end{equation}
Note that the $\sigma$-function has weight $+1$ and the $\wp$-function
weight $-2$.   
By (\ref{P_and_xy}), we see that
\begin{equation}\label{xy_by_sigma}
 \begin{aligned}
  x(u)&=\frac{\sigma''(u)\sigma(u)-\sigma'(u)^2}
             {\sigma(u)^2} \ \ \mbox{and} \\
  y(u)&=\frac{-\tfrac12\sigma'''(u)\sigma(u)^2+\tfrac32\sigma''(u)
    \sigma'(u)\sigma(u)-\sigma'(u)^3}{\sigma(u)^3} \\
  &\qquad -\frac{\mu_1}2{\cdot}\frac{\sigma''(u)\sigma(u)-\sigma'(u)^2}
  {\sigma(u)^2}+\frac{\mu_3}2,
 \end{aligned} 
\end{equation}
where \(\sigma'(u)=\frac{d}{du}\sigma(u)\),
\(\sigma''(u)=\frac{d^2}{du^2}\sigma(u)\), and
\(\sigma'''(u)=\frac{d^3}{du^3}\sigma(u)\).
If the parameters in (\ref{ellipt_curve}) take values
\(\mu_1=\mu_2=\mu_3=0\), \(\mu_4=-\tfrac{1}{4}g_2\),
\(\mu_6=-\tfrac{1}{4}g_3\), then the function \(\wp(u)\) from
Definition \ref{def:wp} satisfies the classical equation
(\ref{weier_ellipt}), and the function \(\sigma(u)\) from Definition
\ref{def:gensig} is exactly the same as the Weierstrass
$\sigma$-function. Under this specialisation the results of this
section map to the well-known results for the Weierstrass functions.

\section{Conjugate points and variables}
\label{SEC:conjpoints} 

For a variable point \((x,y)\) on \(\mathscr{C}\), we have three
points (up to multiplicity) with the same second coordinate \(y\).  We
denote these \emph{conjugate points} by
\begin{equation*}
(x,y),\ \ \ (x^{\star},y),\, \mbox{ and } \, (x^{\star\star},y). 
\end{equation*}
Moreover, for
\begin{equation}\label{nostar}
v=\int_{\infty}^{(x,y)}\omega,
\end{equation}
we define
\begin{equation}\label{stars}
v^{\star}=\int_{\infty}^{(x^{\star},y)}\omega, \, \mbox{ and } \,  
v^{\star\star}=\int_{\infty}^{(x^{\star\star},y)}\omega. 
\end{equation}
Here the paths of integration are defined as the continuous transformations 
by taking \({\,}^\star\) or \({\,}^{\star\star}\) for all points on the path in (\ref{nostar}).  
We call $v, v^{\star}, v^{\star\star}$ \emph{conjugate variables}.

\begin{lemma}\label{sum_of_v}
In the above notation we have 
\begin{equation}\label{star_relation}
v+v^{\star}+v^{\star\star}=0.
\end{equation}
\end{lemma}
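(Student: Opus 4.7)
The plan is to show that $\psi(y) := v + v^{\star} + v^{\star\star}$ is constant as a function of $y$ along the family of parallel slices, and then to evaluate this constant by a limiting argument.

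Fix $y$ and consider the cubic in $X$ obtained by solving $f(X,y)=0$, namely
\[
g_y(X) = X^3 + \mu_2 X^2 + \mu_4 X + \mu_6 - \mu_1 X y - \mu_3 y - y^2,
\]
whose roots are by definition $x$, $x^{\star}$, $x^{\star\star}$. A direct differentiation gives $g_y'(X) = 3X^2 + 2\mu_2 X + \mu_4 - \mu_1 y = -f_x(X,y)$. Since $\omega = -\,dy/f_x = dy/g_y'(X)$, we obtain
\[
\frac{dv}{dy}=\frac{1}{g_y'(x)},\qquad
\frac{dv^{\star}}{dy}=\frac{1}{g_y'(x^{\star})},\qquad
\frac{dv^{\star\star}}{dy}=\frac{1}{g_y'(x^{\star\star})}.
\]

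Next, the sum of these three terms vanishes identically. This is the standard partial-fraction identity
\[
\frac{1}{g_y(X)}=\frac{1}{g_y'(x)(X-x)}+\frac{1}{g_y'(x^{\star})(X-x^{\star})}+\frac{1}{g_y'(x^{\star\star})(X-x^{\star\star})}:
\]
expanding both sides in powers of $1/X$ as $X\to\infty$ and comparing the coefficient of $1/X$ (the left side decays like $1/X^3$) yields $1/g_y'(x)+1/g_y'(x^{\star})+1/g_y'(x^{\star\star})=0$. Hence $d\psi/dy \equiv 0$, so $\psi$ is constant along the deformation.

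Finally, to pin down the constant, one lets $y\to\infty$: each of the three roots of $g_y$ tends to infinity, so the three endpoints all approach the unique point $\infty$, and along the continuously deformed paths each of $v,v^{\star},v^{\star\star}$ tends to $0$. Hence $\psi \equiv 0$, proving (\ref{star_relation}). The only delicate point is the path-labelling: as $y$ varies the three sheets of $\mathscr{C}\to\mathbb{P}^1_y$ may be permuted by monodromy, so one must check that the labels $x,x^{\star},x^{\star\star}$ and the associated integration paths can be followed consistently on a simply-connected domain of $y$. This is precisely the content of the continuous-transformation convention at (\ref{nostar}); alternatively, one can invoke Abel's theorem applied to the meromorphic function $Y-y$ on $\mathscr{C}$ (whose divisor is $(x,y)+(x^{\star},y)+(x^{\star\star},y)-3(\infty)$) to obtain $v+v^{\star}+v^{\star\star}\in\Lambda$ directly, and then use continuity in $y$ and the vanishing at $y=\infty$ to upgrade this congruence to true equality in $\mathbb{C}$.
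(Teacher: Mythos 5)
Your proof is correct and follows essentially the same route as the paper: both hinge on writing $\omega=-dy/f_x$ on each of the three sheets and on the identity $\sum_i 1/\prod_{j\neq i}(x_i-x_j)=0$ for the three roots of the cubic $f(X,y)=0$, which makes the sum of the three pulled-back differentials vanish identically. Your extra step of pinning down the constant via $y\to\infty$ is just an explicit version of what the paper leaves implicit, namely that all three integration paths emanate from the common basepoint $\infty$ where each integral vanishes.
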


\begin{proof} 
Since, for a given \(y\), the \(x\), \(x^{\star}\), \(x^{\star\star}\) are the solution of the equation \(f(X,y)=0\) of \(X\), we see \(f(X,y)=-(X-x)(X-x^{\star})(X-x^{\star\star})\).  So
\begin{equation*}
\begin{aligned}
f_x(x,y)&=-(x-x^{\star})(x-x^{\star\star}), \\
f_x(x^{\star},y)&=-(x^{\star}-x)(x^{\star}-x^{\star\star}), \\
f_x(x^{\star\star},y)&=-(x^{\star\star}-x)(x^{\star\star}-x^{\star}).
\end{aligned}
\end{equation*}
Then since
\begin{equation*}
\frac{1}{(x-x^{\star})(x-x^{\star\star})}
+\frac{1}{(x^{\star}-x)(x^{\star}-x^{\star\star})}
+\frac{1}{(x^{\star\star}-x)(x^{\star\star}-x^{\star})}=0,
\end{equation*}
we find
\begin{equation*}
-\frac{dy}{f_x(x,y)}
-\frac{dy}{f_x(x^{\star},y)}
-\frac{dy}{f_x(x^{\star\star},y)}=0.
\end{equation*}
This implies that
\begin{equation*}
\int_{\infty}^{(x,y)}\Big(
\frac{dy}{f_x(x,y)}
+\frac{dy}{f_x(x^{\star},y)}
+\frac{dy}{f_x(x^{\star\star},y)}\Big)=0. 
\end{equation*} Hence
\begin{equation*}
 \int_{\infty}^{(x,y)}\frac{dy}{f_x(x,y)}
+\int_{\infty}^{(x^{\star},y)}\frac{dy}{f_x(x,y)}
+\int_{\infty}^{(x^{\star\star},y)}\frac{dy}{f_x(x,y)}=0,
\end{equation*}
where the three paths of integrals are chosen as in (\ref{nostar}) and (\ref{stars}). 
Now we have the desired equality. 
\end{proof}
Note that if \(\ell\in\Lambda\) then \(\ell^{\star}\), \(\ell^{\star\star}\in\Lambda\), 
and that \(\ell+\ell^{\star}+\ell^{\star\star}=0\) by Lemma \ref{sum_of_v}.

\begin{remark}
In the Weierstrass case when the parameters in (\ref{ellipt_curve})
take values \(\mu_1=\mu_2=\mu_3=0\), \(\mu_4=-\tfrac{1}{4}g_2\),
\(\mu_6=-\tfrac{1}{4}g_3\), then we have
\(\wp'(v)=\wp'(v^{\star})=\wp'(v^{\star\star})\).
\end{remark}

\noindent Using the curve equation (\ref{ellipt_curve}) and a local parameter
(\ref{tdef}) we may obtain an expansion
\begin{equation}\label{x_exp}
\begin{aligned}
  x&=t^{-2}+\frac13\mu_1t^{-1}
  -\frac13\mu_2+ \left(-\frac{1}{3^4}{\mu_1}^3 -\frac1{3^2}\mu_2\mu_1
+\frac13\mu_3 \right)t\\
  &\quad +\left(\frac{1}{3^5}{\mu_1}^4 +\frac1{3^3}\mu_2{\mu_1}^2
  -\frac1{3^2}\mu_3\mu_1 
+\frac1{3^2}{\mu_2}^2-\frac13\mu_4 
\right)t^2\\
  &\quad +\left(-\frac{4}{3^8}{\mu_1}^6-\frac5{3^6}{\mu_2}{\mu_1}^4
  +\frac{5}{3^5}\mu_3{\mu_1}^3+\Big(\frac{1}{3^3}\mu_4
-\frac{2}{3^4}{\mu_2}^2 \Big){\mu_1}^2  \right. \\
  &\quad  \ \ \ \left. +\frac{2}{3^3}\mu_2\mu_3\mu_1-\frac1{3^2}{\mu_3}^2
  -\frac{2}{3^4}{\mu_2}^3+\frac1{3^2}\mu_4\mu_2-\frac13\mu_6\right)t^4
  +O(t^5).
\end{aligned}
\end{equation}
By looking at the recursion relation giving this expansion, we see it
belongs to \(\mathbb{Z}[\tfrac{1}{3},\vec{\mu}][[t]]\).

Throughout this paper, \(\zeta\) is a fixed primitive cube root of
unity.  Transforming \(t \to \zeta t\) and \(t\to\zeta^2t\) gives rise
to similar expansions of \(x^{\star}\) and \(x^{\star\star}\) in terms
of \(t\).  Using the definition of \(\omega\) and a formal reversing
of the function \(t\mapsto v\), we expand the function \(v\mapsto t\).
Substituting this into the expansions of \(t\mapsto x^{\star}\) and
\(t\mapsto x^{\star\star}\) gives expansions 
of
  \(v^{\star}\) and \(v^{\star\star}\) with respect to \(v\) as
\begin{equation}\label{zeta_multiplication}
\begin{aligned}
  v^{\star}     &=\zeta v+\cdots\in\mathbb{Q}[\vec{\mu}, \zeta][[v]], \\
  v^{\star\star}&=\zeta^2 v+\cdots\in\mathbb{Q}[\vec{\mu},\zeta][[v]].
\end{aligned}
\end{equation}

\section{New Addition formula (General form)} 
\label{SEC:AFN}

First, we describe the general structure of our new class of addition formula, 
before constructing explicit examples in the following sections.

We may extend this class of addition formulae by considering more general maps on the curve.  
Let us take a function
\begin{equation*}
  \varphi : \mathscr{C}\longrightarrow \mathbb{P}^1
\end{equation*}
which is a polynomial of \(x\) and \(y\) over \(\mathbb{Z}[\bm{\mu}]\) of
homogeneous weight.  We suppose it is linear in \(y\) and the coefficient of 
its highest weight term with respect to \(x\) and \(y\) (not including \(\{\mu_j\}\)) is \(1\).  
Let \(m \geq 2\) be the order of unique pole of \(\varphi\) and \(u\)
be the analytic variable of \(\varphi\) regarding \(\mathscr{C}\) as a
complex torus.  Then there will exist also conjugate variables
\begin{equation*}
  u, \ u^{\star}, \ u^{\star^2}, u^{\star^3}, \ \cdots, \ u^{\star^{m-1}}.
\end{equation*}
Namely, these \(m\) variables are generically different, vary
continuously, and satisfy
\[
\varphi(u)=\varphi(u^{\star})=\cdots=\varphi(u^{\star^{m-1}}).
\]  
It is clear that these points have similar properties to those in Section \ref{SEC:conjpoints}. 
Namely, that 
\[
u+u^{\star}+\cdots+u^{\star^{m-1}}=0.
\] 
Indeed \(d(u+u^{\star}+\cdots+u^{\star^{m-1}})\) can be regarded as a holomorphic 1-form 
on \(\mathbb{P}^1\) because this varies depending only on \(\varphi(u)\), and hence the vanishing. 
\begin{theorem}
\label{main_gen2}
For \(n\) variables \(u^{(j)}\) \((j=1\), \(\cdots\), \(n)\), under
the conditions stated above
\begin{equation}\label{most_general_lhs}
  \frac{\sigma(u^{(1)}+\cdots+u^{(n)})\prod_{i<j}\prod_{k=1}^{m-1}
\sigma(u^{(i)}+u^{(j)\star^k})}
       {\prod_{j=1}^n\big(
           \sigma(u^{(j)})^{1+(m-1)(n-j)}
           \prod_{k=1}^{m-1}\sigma(u^{(j)\star^k})^{j-1}\big)}
\end{equation}
may be expressed as a polynomial in the \(x(u^{(j)})\) and
\(y(u^{(j)})\) for $j=1$, $\dots$, $n$ of weight
$-\tfrac{1}{2}(n-1)(mn-n+2)$ over the ring
\,\(\mathbb{Q}[\vec{\mu}]\).
\end{theorem}

\begin{remark}
  If $\mu_1=\mu_2=\mu_4=0$, The expression in terms of
  \(x(u^{(j)})\)'s and \(y(u^{(j)})\)'s is symmetric with respect to
  any exchange
\begin{equation*}
  \big(x(u^{(i)}),y(u^{(i)})\big) \longleftrightarrow 
  \big(x(u^{(j)}),y(u^{(j)})\big).  
\end{equation*}
This fact is proved using the following: if \(\mu_1=\mu_2=\mu_4=0\),
we have
\begin{equation*}
  \sigma(\zeta u)=\zeta\sigma(u)
\end{equation*}
(see \cite{emo}, Lemma 4.1), and \(u^{\star}=\zeta u\),
\(u^{\star\star}=\zeta^2 u\).  Then the left hand side is easily shown
to be symmetric with respect to any exchange
\(u^{(i)}\longleftrightarrow u^{(j)}\).
\end{remark}

We prove Theorem \ref{main_gen2} only in the special case \(\varphi=y\) (Theorem \ref{main_gen}).  
The proof of Theorem \ref{main_gen} is sufficiently descriptive to generalise to Theorem \ref{main_gen2}, 
but to write down the full proof for the conjecture would require considerable space and much extra notation, 
without illuminating the general principles involved.  
\begin{theorem}\label{main_gen}
  Let the function \(\varphi=y\), so that \(m=3\).  We denote
  \(u^{{\star}^2}=u^{\star\star}\).  Other
  notation is as introduced above and \(u^{(1)}\), \(u^{(2)}\),
  \(\cdots\), \(u^{(n)}\) are variables. Then
\begin{equation}
\label{general}
  \frac{\sigma(u^{(1)}+u^{(2)}+\cdots+u^{(n)})
    \prod_{i<j}\sigma(u^{(i)}+u^{(j)\star})\,\sigma(u^{(i)} +u^{(j)\star\star})}
  {\prod_{j=1}^n\sigma(u^{(j)})^{2n+1-2j}
   \sigma(u^{(j)\star})^{j-1}
   \sigma(u^{(j)\star\star})^{j-1}}
\end{equation}
may be expressed as a polynomial in the \,\(x(u^{(j)})\) and
\(y(u^{(j)})\) for \(j=1\), ... , \(n\) of weight \(-(n^2-1)\) over
the ring \,\(\mathbb{Q}[\vec{\mu}']\).
\end{theorem}


\begin{remark}\label{int_coeff}
Theorems \ref{main_gen2} and \ref{main_gen} are valid as a power
series identity over quite general base rings and need not be restricted 
only to the case of the complex numbers.
\par
In particular, the coefficients in the expression of
(\ref{most_general_lhs}) in terms of \(x(u^{(j)})\) and
\(y(u^{(j)})\) seem to belong to \(\mathbb{Z}[\vec{\mu}]\).  In the
following sections we show this to be the case for \(\varphi=y\)
with \(n=2\) or \(3\).
\end{remark}


\begin{proof}[Proof of Theorem \ref{main_gen}] 
Regarding (\ref{general}) as a function of each \(u^{(j)}\) we can check that it is meromorphic and 
periodic with respect to \(\Lambda\) (see the proof of Theorem \ref{main_thm} for details of such checks).  
Hence, it must have a rational expression in terms of \(x(u^{(j)})\), \(y(u^{(j)})\) for \(j=1\), ... , \(n\). 
For arbitrarily fixed \(j\), let \(v=u^{(j)}\).  
Then as a function of \(v\), (\ref{general}) has its only pole at \(v=0\) (of order \(2n-1\)).  
Recalling that the $\sigma$-function has weight $+1$ we see that (\ref{general}) has weight $1+n(n-1) - n(2n-1)=-(n^2-1)$. 
So (\ref{general}) can be expressed as a polynomial of the \(x(u^{(j)})\) and \(y(u^{(j)})\) (of weight $-(n^2-1)$) 
and hence an addition formula may be derived by taking (\ref{general}) as the left hand side 
and constructing this polynomial for the right hand side.
 
To find the right hand side we may use the method of
undetermined coefficients as follows.  
Firstly, reducing higher terms of \(y(u^{(j)})\)s in the right hand side to linear terms 
of them by using the relation \(f\big(x(u^{(j)}),y(u^{(j)})\big)=0\), we shall prepare the monomials
\begin{equation}\label{monomial}
  \prod_{j=1}^n{x(u^{(j)})}^{p_j}{y(u^{(j)})}^{\varepsilon_j},
\end{equation}
where \(p_j\) are non-negative with \(2p_j+3\varepsilon_j\leq 2n-1\) and \(\varepsilon_j\) are \(0\) or \(1\).  
Looking at the leading terms in Laurent expansions with respect to \(u^{(j)}\) of these monomials, 
we see that they are linearly independent over \(\mathbb{Q}(\vec{\mu})\).  
Of course, there are only finitely many such monomials.  
Secondly, set the right hand side as
\begin{equation}\label{indet_method}
  \sum_{\{p_j,\varepsilon_j\}}C_{\{p_j,\varepsilon_j\}} \prod_{j=1}^n
  {x(u^{(j)})}^{p_j}{y(u^{(j)})}^{\varepsilon_j}
\end{equation}
with undetermined coefficients \(C_{\{p_j,\varepsilon_j\}}\).  
Because \(\sigma(u^{\star})\) and \(\sigma(u^{\star\star})\) are conjugate each other 
with respect to \(\zeta\longleftrightarrow\zeta^2\), it must be \(C_{\{p_j,\varepsilon_j\}}\in\mathbb{Q}(\vec{\mu})\).  
Then, after rewriting the right hand side by using (\ref{xy_by_sigma}) as a rational function 
of \(\sigma(u^{(j)})\), \(\sigma'(u^{(j)})\), \(\sigma''(u^{(j)})\), \(\sigma'''(u^{(j)})\) for \(j=1\), \(\cdots\), \(n\), 
we multiply both sides by
  \begin{equation*}
  \prod_{j=1}^n\sigma(u^{(j)})^{2n-1}.
  \end{equation*}
Then we get the following equality:
\begin{equation}\label{expand_by_u}
  \begin{aligned}
    \sigma(u^{(1)}&+u^{(2)}+\cdots+u^{(n)})
    \prod_{i<j}\sigma(u^{(i)}+u^{(j)\star})\,\sigma(u^{(i)} +u^{(j)\star\star})\\
    \times & \ \mbox{(a product of power series of \(u^{(j)}\) in 
      \(\mathbb{Q}[\vec{\mu}][[u^{(j)}]]\)\ )}\\
    &=\sum_{\{p_j,\varepsilon_j\}}C_{\{p_j,\varepsilon_j\}} \prod_{j=1}^n
     \sigma(u^{(j)})^{2n-1}\,{x(u^{(j)})}^{p_j}\,{y(u^{(j)})}^{\varepsilon_j}.
  \end{aligned}
\end{equation}
Here, we used that \(\sigma(u^{\star})\sigma(u^{\star\star})/\sigma(u)^2=1+\cdots\in\mathbb{Q}[\vec{\mu}][[u]]\). 
By (\ref{xy_by_sigma}), the right hand side of (\ref{expand_by_u}) is a polynomial of\, \(\sigma(u^{(j)})\),
\(\sigma'(u^{(j)})\), \(\sigma''(u^{(j)})\), \(\sigma'''(u^{(j)})\)\, for \(j=1\),\(\cdots\), \(n\) over \(\mathbb{Q}(\vec{\mu})\).
Using (\ref{hurwitz1}) and that \(u^{\star}u^{\star\star}=u^2+\cdots\in\mathbb{Q}[\vec{\mu}][[u]]\), 
we see that the left hand side of (\ref{expand_by_u}) is 
expanded as a series in
\begin{equation*}
\mathbb{Q}[\vec{\mu}][[u^{(1)},u^{(2)},\cdots,u^{(n)}]]. 
\end{equation*}
Now, we focus on a term of the form
\begin{equation}\label{sigma_mono}
  \prod_{j=1}^n\sigma'(u^{(j)})^{s_j}\,\sigma(u^{(j)})^{k_j}
\end{equation}
for some set \(\{s_j\geq0,\,k_j\geq0\}\) in the right hand side. 
Firstly, we look at such a term with all \(k_j=0\). 
This comes from a unique term of the right hand side of (\ref{expand_by_u}) because, 
by attending the power series expansion of \(\sigma(u)^2\,x(u)\) and \(\sigma(u)^3\,y(u)\) 
with respect to \(u\) obtained by the expressions 
\begin{equation}\label{xy_sigma}
 \begin{aligned}
  x(u)&=\frac{\sigma''(u)\sigma(u)-\mbox{\fbox{\(\sigma'(u)^2\)}}}
             {\sigma(u)^2} \ \ \mbox{and} \\
  y(u)&=\frac{-\tfrac12\sigma'''(u)\sigma(u)^2+\tfrac32\sigma''(u)
    \sigma'(u)\sigma(u)-\mbox{\fbox{\(\sigma'(u)^3\)}}}{\sigma(u)^3}\\
  &\qquad\qquad-\frac{\mu_1}2{\cdot}\frac{\sigma''(u)\sigma(u)-\sigma'(u)^2}
  {\sigma(u)^2}+\frac{\mu_3}2, 
 \end{aligned} 
\end{equation}
the lowest terms of the power series expansion of a term in 
the right hand side of (\ref{expand_by_u}) with respect to \(u^{(j)}\)s 
will be contributed by the boxed terms. 
Hence the coefficient \(C_{\{p_j,\varepsilon_j\}}\) of the unique term is in \(\mathbb{Z}[\vec{\mu}']\). 
In the next time, we introduce an order to the set 
\[
\{k_1,k_2,\cdots,k_n;s_1,s_2,\cdots,s_n\}
\]
with lexicographic order in \(k_j\)s and anti-lexicographic order in \(s_j\)s, and with 
assuming the former order is stronger than latter. 
According to this order, we check successively that each term (\ref{sigma_mono}) comes from 
which terms in the right hand side of (\ref{expand_by_u}) and we see the corresponding coefficients 
\(C_{\{p_j,\varepsilon_j\}}\) are all in \(\mathbb{Q}[\vec{\mu}]\). 
\end{proof}


\section{New Addition formula (2-variable case)}
\label{SEC:AF2}

\noindent 
For a fixed \(x\), we have two points on the curve.  If one point is denoted say \((x,y)\), then the other point is \((x,-y-\mu_1x-\mu_3)\).  
In this situation, if 
\(u=\int_{\infty}^{(x,y)}\omega\) 
then 
\(-u=\int_{\infty}^{(x,-y-\mu_1x-\mu_3)}\omega\).  
Suppose we replace the sigma function in equation (\ref{2term_fs}) from the Introduction 
by the most general sigma function from Definition \ref{def:gensig}.  
It can be easily checked that (\ref{2term_fs}) remains valid for the fully general curve \(\mathscr{C}\).

We now give the our first explicit new addition formula, 
by considering fixing the other coordinate and using the conjugate variables defined in Section \ref{SEC:conjpoints}.
We use the notation of the previous sections but with variables $u$ and $v$ in place of the $u^{(1)}$ and $u^{(2)}$ from Section \ref{SEC:AFN}.

\begin{theorem}\label{main_thm} 
We have the addition formula
\begin{equation}\label{main}
  \begin{aligned}
    - \frac{\sigma(u+v)\sigma(u+v^{\star})\sigma(u+v^{\star\star})}
    {\sigma(u)^3\sigma(v)\sigma(v^{\star})\sigma(v^{\star\star})}
    &=y(u)-y(-v)\\
    &=y(u)+y(v)+\mu_1x(v)+\mu_3\\    
    &\hskip -30pt
     =\frac12\big(\wp'(u)+\wp'(v)\big)
     +\frac{\mu_1}{2}\big(\wp(u)-\wp(v)\big).\\
  \end{aligned}
\end{equation}
\end{theorem}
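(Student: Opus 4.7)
The plan is to apply Theorem \ref{main_gen} at $n=2$, reducing the claim to an identity of polynomials of weight $-3$ in $x(u),y(u),x(v),y(v)$, and then to fix finitely many rational coefficients by matching Laurent expansions at $u=0$.

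By Theorem \ref{main_gen} the LHS of \eqref{main} is a polynomial of weight $-(n^2-1)=-3$ in $x(u),y(u),x(v),y(v)$ with coefficients in $\mathbb{Q}[\vec{\mu}]$. Given the weights $\mathrm{wt}(x)=-2$, $\mathrm{wt}(y)=-3$, $\mathrm{wt}(\mu_j)=-j$ (as forced by homogeneity of \eqref{ellipt_curve} and \eqref{2.024}), a $\mathbb{Q}$-basis for the space of weight-$-3$ polynomials in these variables is $\{y(u),\,y(v),\,\mu_1 x(u),\,\mu_1 x(v),\,\mu_3\}$, so there exist rationals $\alpha_1,\alpha_2,\beta_1,\beta_2,\gamma$ with
\[
\mathrm{LHS}=\alpha_1 y(u)+\alpha_2 y(v)+\beta_1\mu_1 x(u)+\beta_2\mu_1 x(v)+\gamma\mu_3.
\]

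To determine them, I would expand both sides as Laurent series in $u$ around $u=0$ with $v$ held as a parameter. Since $\sigma(u)=u+O(u^3)$ and $\sigma(u+a)=\sigma(a)+u\sigma'(a)+\cdots$, the LHS takes the form $-u^{-3}(1+u\,T(v)+u^2\,U(v)+u^3\,W(v)+\cdots)$, where $T(v)=\sigma'(v)/\sigma(v)+\sigma'(v^\star)/\sigma(v^\star)+\sigma'(v^{\star\star})/\sigma(v^{\star\star})$ and $U(v), W(v)$ are analogous combinations. From \eqref{2.024} the relevant right-hand Laurent series are $y(u)=-u^{-3}-\tfrac12\mu_1 u^{-2}+O(1)$ and $x(u)=u^{-2}+O(1)$. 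Matching the $u^{-3}$ and $u^{-2}$ coefficients fixes $\alpha_1$ and $\beta_1$ (and along the way forces $T(v)$ into a specific closed form); matching the constant-in-$u$ term, itself a Laurent series in $v$ via the same \eqref{2.024}, then fixes $\alpha_2,\beta_2,\gamma$. This yields the first form of the RHS. The second equality is immediate from the negation rule $y(-w)=-y(w)-\mu_1 x(w)-\mu_3$ (Vieta on $f(x,Y)=0$ viewed as a quadratic in $Y$), and the third follows from $\wp(u)=x(u)$, $\wp'(u)=2y(u)+\mu_1 x(u)+\mu_3$ of \eqref{P_and_xy}.

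\textbf{Main obstacle.} The substantive work lies entirely in the Laurent expansion step: computing $T(v), U(v), W(v)$ as Laurent series in $v$ requires the $\sigma$-expansion \eqref{2.10}, the conjugate-point series \eqref{zeta_multiplication}, and careful bookkeeping because the cube-root-of-unity simplifications $v^\star=\zeta v$ available in the equianharmonic case \eqref{emo_fs} no longer apply. A pragmatic shortcut is to determine $\alpha_1,\alpha_2$ first under the specialization $\mu_1=\mu_3=0$ (where many terms vanish and a symmetry $u\leftrightarrow v$ is recovered via Theorem \ref{main_gen}) and then perturb in $\mu_1,\mu_3$ to extract $\beta_1,\beta_2,\gamma$.
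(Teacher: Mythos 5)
Your overall strategy is the same as the paper's: show the left side is an elliptic function of weight $-3$ with a triple pole only at the origin, write a weight-homogeneous ansatz with undetermined coefficients in $\mathbb{Q}[\vec{\mu}]$, and pin the coefficients down from Laurent expansions. (The paper additionally exploits the zero set $u=-v,-v^{\star},-v^{\star\star}$ and the parity of $\wp$ to force $a_1=a_2$, $b_1=-b_2$ and to kill the constant terms before expanding, which reduces the amount of series computation; your route of matching expansions all the way down is heavier but legitimate.) Two points need attention, one of which is a genuine gap.

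First, your claimed basis of the weight-$-3$ space is incomplete. With $\mathrm{wt}(x)=-2$, $\mathrm{wt}(y)=-3$, $\mathrm{wt}(\mu_j)=-j$ (you are right that these are the weights forced by homogeneity; the paper's displayed values for $\mathrm{wt}(x),\mathrm{wt}(y)$ are transposed), the monomials of weight $-3$ in $x(u),y(u),x(v),y(v)$ over $\mathbb{Q}[\vec{\mu}]$ are
\[
y(u),\quad y(v),\quad \mu_1 x(u),\quad \mu_1 x(v),\quad \mu_3,\quad \mu_1\mu_2,\quad {\mu_1}^3,
\]
i.e.\ seven, not five: you dropped the pure constants $\mu_1\mu_2$ and ${\mu_1}^3$, which the paper carries as $c_2\mu_1\mu_2+c_1{\mu_1}^3$ and then proves are zero. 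The method of undetermined coefficients is only a proof if the ansatz spans the whole space in which the left side is known a priori to lie; matching five Laurent coefficients against a five-term ansatz does not rule out a discrepancy supported on the two omitted constants. The fix is trivial (add the two terms and check the constant term of the double expansion in $u$ and $v$), but as written the argument is not closed. Second, a structural caveat: you invoke Theorem \ref{main_gen} at $n=2$ for the statement that the left side is a polynomial in $x,y$, but the paper's proof of Theorem \ref{main_gen} explicitly defers the periodicity verification to the proof of Theorem \ref{main_thm}. There is no mathematical circularity -- the periodicity computation via Lemma \ref{translation} and Lemma \ref{sum_of_v} is independent of either conclusion -- but you should carry out that check directly (the exponents contributed by $u\mapsto u+\ell$ sum to $L(v+v^{\star}+v^{\star\star},\ell)=0$, and those for $v\mapsto v+\ell$ sum to $L(u,\ell+\ell^{\star}+\ell^{\star\star})=0$) rather than cite a theorem whose proof points back here. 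Your handling of the second and third equalities in \eqref{main} is correct and matches the paper.
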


\begin{remark}\label{on_classical}  
We first comment on how our formula is modified when specialising the curve. 
\begin{enumerate}
\item As noted earlier, when the parameters in (\ref{ellipt_curve}) take values 
\(\mu_1=\mu_2=\mu_3=0\),  \(\mu_4=-\tfrac{1}{4}g_2\), \(\mu_6=-\tfrac{1}{4}g_3\),
the function \(\wp(u)\) from Definition \ref{def:wp} satisfies the classical equation  (\ref{weier_ellipt}).  
In this case, by (\ref{parity}) and (\ref{P_and_xy}), the right hand side of the formula reduces to give 
the addition formula (\ref{emo_fs}) we presented in the introduction.
\item If we specialise further to consider the equianharmonic case (by further setting further $\mu_4=g_2=0$) then 
Theorem \ref{main_thm} reduces to Proposition 5.1 of \cite{emo}, with equation (\ref{main}) becoming (\ref{emo_fs}) from the Introduction.
\end{enumerate}
\end{remark}

\begin{proof}[Proof of Theorem \ref{main_thm}] 
We first prove the left hand side of (\ref{main}) is a meromorphic function of both  \(u\) and \(v\).  
Using (\ref{sum_of_v}) and Lemma \ref{translation}, 
we see the left hand side is invariant with respect to the transformations \(u\mapsto u+\ell\), 
\(v\mapsto v+\ell\) for \(\ell\in\Lambda\).  For the transformation \(u\mapsto u+\ell\), the exponent of the exponential factor becomes
  \begin{equation*}
  \begin{aligned}
    &L(u+v+\tfrac12\ell,\ell)+L(u+v^{\star} +\tfrac12\ell,\ell) \\
    &\qquad +L(u+v^{\star\star}+\tfrac12\ell,\ell)
    -3\,L(u+\tfrac12\ell,\ell)\\
    &=L(v+v^{\star}+v^{\star\star}, \ell)\\
    &=L(0,\ell)\\
    &=0.
  \end{aligned}
  \end{equation*}
For \(v\mapsto v+\ell\), it becomes 
\begin{equation*}
  \begin{aligned}
    &\ \ \ L(u+v+\tfrac12\ell,\ell)+L(u+v^{\star}
    +\tfrac12\ell^{\star},
    \ell^{\star})+L(u+v^{\star\star}+\tfrac12\ell^{\star\star},
    \ell^{\star\star})\\
    &\ \ \ \ \ \
    -L(v+\tfrac12\ell,\ell)-L(v^{\star}+\tfrac12\ell^{\star},
    \ell^{\star}) -L(v^{\star\star}+\tfrac12\ell^{\star\star},
    \ell^{\star\star})\\
    &=L(u, \ell)+L(u, \ell^{\star})+L(u, \ell^{\star\star})\\
    &=L(u, \ell+\ell^{\star}+\ell^{\star\star})\\
    &=L(u,0)\\
    &=0.
  \end{aligned}
\end{equation*}
Therefore, the left hand side is a function of \(u\) modulo \(\Lambda\).  It also has a unique pole at \(u=0\).  
It is well-known that such a function is a polynomial of \(\wp(u)\) and its higher order derivatives.  
In this case the poles are of order \(3\), so we need only use $\wp$ and $\wp'$. 
 
Since the equation must be of homogeneous weight (weight \(-3\) on both sides), we know that the left hand side must be of the form
\begin{equation*}
  a_1\wp'(u)+a_2\wp'(v)+b_1\mu_1\wp(u)+b_2\mu_1\wp(v)+c_1{\mu_1}^3 
  +c_2{\mu_1}\mu_2+c_3\mu_3
\end{equation*}
for some constants \(a_1\), \(a_2\), \(b_1\), \(b_2\), \(c_1\), \(c_2\) and \(c_3\).  
For arbitrary fixed \(v\), as a function of \(u\), the left hand side has zeroes 
at \(u=-v\), \(u=-v^{\star}\), \(u=-v^{\star\star}\) (of order \(1\) each), and no other zeros.  
Using the fact that the $\wp(u)$ is an even function we have that
\begin{equation*}
  a_2=a_1\ (= a \mbox{ say}), \quad 
  -b_2=b_1\ (=b \mbox{ say}), \quad 
  c_1=c_2=c_3=0.
\end{equation*}
Substituting the truncated expansion (\ref{x_exp}) up to the constant term and (\ref{zeta_multiplication}) into (\ref{main}) gives
\begin{equation*}
  -\frac{1}{u^3}-\frac{1}{v^3}+\frac12\mu_1\bigg(\frac{1}{u^2} 
  -\frac{1}{v^2}\bigg)+\cdots.
\end{equation*}
Since 
\begin{equation*}
\wp(u)=\frac{1}{u^2}+\cdots,
\end{equation*}
we find the coefficients are $a=\frac{1}{2}$ and $b=\frac{1}{2}$, concluding the proof.
%
\end{proof}
We finish the section with some further remarks on the new formula (\ref{emo_fs}).  
It could be argued that this formula lacks symmetry as the variables $u$ and $v$ are treated differently.  
We can replace $u$ by $u^{\star}$ and $u^{\star\star}$ in turn, remembering that
$\wp(u)=\wp(u^{\star})=\wp(u^{\star\star})$, then add the three to get
\[  
\sum_{i=1}^3\left[\frac{\prod_{j=1}^3\sigma(u_i+v_j)}
  {\sigma(u_i)^3\prod_{j=1}^3 \sigma(v_j)}\right] = \frac32
    \left(\wp'(u)+\wp'(v)\right)
\]
where for typographical convenience we use \(u_i,
  i=1,2,3\), to represent \(u, u^{\star}\), and \(u^{\star\star}\) respectively. 
However in producing such a formula we are throwing away information,
in particular by subtracting two of the three relations described
above we can get
\[
\frac{\sigma(u+v)\sigma(u+v^{\star})\sigma(u+v^{\star\star})}{\sigma(u)^3}
=
\frac{\sigma(u^{\star}+v)\sigma(u^{\star}+v^{\star})\sigma(u^{\star}+v^{\star\star})}
{ \sigma(u^{\star})^3},
\]
and similarly for $(u,u^{\star})$ and
$(u^{\star},u^{\star\star})$. 
A similar equation is seen in Corollary 12.2 of \cite{o11}.

\section{New Addition formula (3-variable case)}
\label{SEC:AF3} 

The second new explicit addition formula is given below.  
It is a natural three variable extension of Theorem \ref{main_thm}.  
See also \cite{fs} and \cite{o11} for similar formulae.

\begin{theorem}\label{main_thm3}
Let \(u\), \(v\), and \(w\) be variables.  Denote, 
for brevity, \((x_u,y_u)=(x(u),y(u))\) and similarly for $v$ and $w$.
With the notation of the previous sections we have a new addition formula expressing
\begin{equation*}
\frac{\sigma(u+v+w)
\sigma(u+v^{\star})\sigma(u+v^{\star\star})
\sigma(u+w^{\star})\sigma(u+w^{\star\star})
\sigma(v+w^{\star})\sigma(v+w^{\star\star})}
{\sigma(u)^5
 \sigma(v)^3\sigma(v^{\star})\sigma(v^{\star\star})
 \sigma(w)\sigma(w^{\star})^2\sigma(w^{\star\star})^2}
\end{equation*}
as $\sum_{i=0}^8 r_i$ where the $r_i$ are as stated below. Each is a polynomial in \(x_u\), \(x_v\),
\(x_w\), \(y_u\), \(y_v\), \(y_w\), and the \(\{\mu_j\}\) {\rm(}of combined
weight \(i\){\rm)}.
\begin{equation*}
\begin{aligned}
 r_0 &=(y_u y_v + y_u y_w + y_v y_w - x_u x_v x_w)(x_u +x_v +x_w) \\
 &\qquad 
 -x_u^2 x_v^2 -x_u^2 x_w^2 -x_v^2 x_w^2, \\
  r_1 &= \mu_1 (x_v x_u y_v +2 x_v x_u y_w +2 y_w x_u^2 +x_w x_u y_w
  -x_w^2 y_u \\ &\qquad +x_v x_u y_u 
  +x_w y_v x_u +y_v x_u^2 +y_w x_v^2),
  \\
  r_2 & = ( x_u^2 x_v -x_u x_w^2 +y_w y_u) \mu_1^2 -(x_v^2 x_w -y_v
  y_u +x_u^2 x_v +x_u x_w^2 \\ 
  &\qquad +2 x_v x_w x_u 
  -y_w y_u -y_w y_v +x_v x_w^2 +x_u^2 x_w +x_u x_v^2) \mu_2,
  \\
  r_3 &= \mu_1^3 y_w x_u +(x_u y_v +2 y_w x_u +x_v y_w -x_w y_u) \mu_2\mu_1
  \\  &\quad 
  +(y_v +y_w +y_u) (x_u +x_v +x_w) \mu_3, 
  \\
  r_4 &= -\mu_1^2 x_u \mu_2 x_w +( x_u^2 -x_w^2 +2 x_u x_v +x_u x_w)
  \mu_3 \mu_1 
  \\ &\qquad 
  -(x_u x_v +x_v x_w +x_u x_w) \mu_2^2
  -(x_u^2 +x_v^2 +x_w^2) \mu_4,
  \\
  r_5 &= \mu_1^2 y_w \mu_3 -( y_u-y_w ) \mu_4 \mu_1
  +(y_v +y_w +y_u) \mu_3 \mu_2,
  \\
  r_6 &= (x_u -x_w ) \mu_3 \mu_2 \mu_1 -\mu_1^2 x_u \mu_4 
  -(x_u +x_v +x_w) (\mu_2 \mu_4 -\mu_6 -\mu_3^2),
  \\
  r_7 &= 0, \\
  r_8 &= (\mu_6 +\mu_3^2) \mu_2-\mu_1 \mu_3 \mu_4  -\mu_4^2.
\end{aligned}
\end{equation*}
\end{theorem}

\begin{proof}
The left hand side of the new formula is meromorphic in \(u\), \(v\), and \(w\).  
Moreover, we can check easily that it is periodic with respect to \(\Lambda\).  
Hence it may be expressed in terms of elliptic functions.  
Further, we can check that the left hand side has poles of order five each in $u,v$ and $w$ and so the right hand side must have an expression in $\wp(u), \wp(v)$ and $\wp(w)$ and their derivatives up to third order.  
More specifically, the right hand side will be a sum of terms, each a product of three functions, one in each of the variables and with all functions taken from the set $\{1,\wp,\wp',\wp'',\wp'''\}$.  Such an expression is clear from the linear algebra when considering the space of elliptic functions graded by pole order (for more details on such spaces see for example \cite{eeo11, ea12}).  
This also clarifies why $r_7=0$: since there is no elliptic function of weight 1 to include in the right hand side.
 
The coefficients of this right hand side may then be determined using the series expansions of the functions discussed earlier.  Since the left hand side is of weight $-8$ the expansions used need to contain terms with monomials in $\mu_i$ up to weight $-8$.  We used {\sc Maple} to implement this calculation (with details on similar calculations given in \cite{eeo11}).  The right hand side presented above was then obtained by making the substitutions implied by (\ref{P_and_xy}).
\end{proof}
\begin{remark}
Using the mappings in {\rm (\ref{P_and_xy})} we could rewrite the right hand side of the formula in Theorem \ref{main_thm3} in terms of \(\wp\) and its first derivative. 
\end{remark}

\begin{remark}
Let
\begin{equation*}
\begin{aligned}
f_2&=x_{u}+x_{v}+x_{w}+\mu_2,\\
f_4&=x_{u}x_{v} + x_{v}x_{w} + x_{u}x_{w}-\mu_4+\mu_1 y_{w},
\end{aligned}
\end{equation*}
where the suffices of $f$ are chosen to denote the weight.  Each of these vanishes when \(v=u^{\star}\) and \(w=u^{\star\star}\) at the same time, since then \(y(u)=y(u^{\star})=y(u^{\star\star})\) and \(x(u)\), \(x(u^{\star})\), \(x(u^{\star\star})\) are the three solutions of the cubic equation
\begin{equation*}
X^3+\mu_2X^2+(\mu_4-\mu_1y(u))X+\mu_6-y(u)^2-\mu_3y(u)=0.
\end{equation*}
A calculation with Gr\"obner bases implemented with {\sc Maple} shows that the right hand side of the formula presented in Theorem \ref{main_thm3} lies in the ideal generated by $f_2$ and $f_4$.   Specifically, we have
\begin{equation*}
\sum_{i=0}^8 r_i = Q_6 f_2+ Q_4 f_4, 
\end{equation*}
where
\begin{align*}
Q_6 &= y_w \mu_1^3 -(\mu_4 -x_v x_w -x_u x_v) \mu_1^2 
\\ &\quad 
+(x_u \mu_3 -\mu_3 x_w -x_w y_w  -x_w y_u +2 y_w x_u  + x_u y_v )\mu_1 \\ 
&\quad -(x_u x_v +x_v x_w +x_u x_w) \mu_2 +\mu_3^2
 +(y_v +y_w +y_u) \mu_3 -x_u x_v^2 +\mu_6 \\ 
 &\quad - x_u x_w^2 +y_w y_u
 -x_v^2 x_w +y_v y_u -x_v x_w^2 +y_w y_v -x_v x_w x_u -x_u \mu_4,\\
Q_4 &=  (y_u +\mu_3) \mu_1-(\mu_2 +x_v +x_w) \mu_1^2 +(x_v +x_w) \mu_2 
\\ &\quad 
+\mu_4 +x_w^2 +x_v x_w +x_v^2.
\end{align*}
This expression, along with (\ref{star_relation}) shows that both sides
of the equation in Theorem \ref{main_thm3} vanish when \(v=u^{\star}\) and
\(w=u^{\star\star}\).  
\end{remark}

\begin{remark}
In Remark \ref{on_classical} we discussed how the \(2\)-variable formula collapsed to known results when restricting the curve.  
We note now some similar restrictions for the \(3\)-variable result.
\begin{enumerate}
\item  If \(\mu_1=\mu_2=\mu_3=0\), \(\mu_4=-\tfrac14g_2\), \(\mu_6=-\tfrac14g_3\) 
in (\ref{ellipt_curve}), then the right hand side of the formula in Theorem \ref{main_thm3} becomes 
\begin{align}
  -\tfrac{1}{16} g_2^2&+\tfrac14 g_2 ({\wp(v)}^2 +{\wp(w)}^2 +{\wp(u)}^2)
  \\ &
  -{\wp(u)}^2{\wp(w)}^2 -{\wp(v)}^2{\wp(w)}^2 -{\wp(u)}^2{\wp(v)}^2  \nonumber \\
  &-\tfrac{1}{4}(\wp(u) + \wp(v) + \wp(w)) \big(4\wp(u) \wp(v) \wp(w) + g_3
  \nonumber \\  &\qquad  
  -\wp'(u) \wp'(v) -\wp'(v) \wp'(w) -\wp'(u) \wp'(w)\big).   \label{eq:3TRem1}
\end{align}
\item Suppose instead we simplify by setting $\mu_1=\mu_2=\mu_4=0$.  Of course we get another simplification of the right hand side, but in this case also a simplification of the left hand side.  
Now, $x^3$ is the only term in the curve equation with $x$ and so the starred   variables can all be described using roots of unity acting on the non-starred variables.  Hence in this case we have 
\begin{equation}\label{eq:3TRem2}
\begin{aligned}
&\frac{\sigma(u+v+w)
\sigma(u{+}\zeta v)\sigma(u{+}\zeta^2 v)
\sigma(u{+}\zeta w)\sigma(u{+}\zeta^2 w)
\sigma(v{+}\zeta w)\sigma(v{+}\zeta^2 w)}
{\sigma(u)^5
 \sigma(v)^3\sigma(\zeta v)\sigma(\zeta^2 v)
 \sigma(w)  \sigma(\zeta w)^2\sigma(\zeta^2 w)^2} \\
&= (x_{v}+x_{u}+x_{w})\mu_{6} + (x_{v}+x_{u}+x_{w})\mu_{3}^2\\
&\quad + (y_{u}+y_{v}+y_{w})(x_{v}+x_{u}+x_{w})\mu_{3} - x_{u}^2x_{w}^2 - x_{v}^2x_{w}^2 - x_{u}^2x_{v}^2 \\
&\quad - (x_{v}+x_{u}+x_{w})(x_{v}x_{w}x_{u} - y_{v}y_{w} - y_{u}y_{w} - y_{v}y_{u}).
\end{aligned}
\end{equation}

\item The equianharmonic case is a sub-case of the both the above specialisations.  
In this case we have the simplified left hand side from (\ref{eq:3TRem2}) 
and a further reduced right hand side which obtained by setting $g_2=0$ in (\ref{eq:3TRem1}).  
Using $\wp$-coordinates analogously to (\ref{emo_fs}), the right hand side is
\begin{align*}
&\tfrac{1}{4}(\wp(u) +\wp(v) +\wp(w))( \wp'(u) \wp'(v)
+ \wp'(v) \wp'(w) +\wp'(u) \wp'(w)  \\
&\quad -g_3 - 4\wp(v)\wp(w)\wp(u)) -\wp(u)^2 \wp(v)^2 -\wp(u)^2 \wp(w)^2 -\wp(v)^2 \wp(w)^2.
\end{align*}
\end{enumerate}
\end{remark}

\section{Final remarks} 
\label{SEC:Final}

A specialisation not considered above was the rational case, i.e. setting all $\mu_i=0$ (and $g_i=0$).  In this case it may be checked that all equations collapse to simple algebraic identities.

We finish by giving some thoughts on further generalisations of the results.  
\begin{enumerate}

\item As proved by Theorem \ref{main_gen}, the explicit formulae certainly generalise to an $n$-variable case.
However, we find that trying to derive the expanded form of the right hand side in the 4-variable case 
using naive series expansions exceeds the memory limits of the current machines available to us.  
We expect that progress would follow from the discovery of a more compact expression for these right hand sides, for example, as a determinant.

\item For the equianharmonic curve \(y^2=x^3+\mu_6\), there is an action of the group of the sixth roots of unity acts on this curve, and on the coordinate space \(\mathbb{C}\) of \(\wp(u)\) and \(\sigma(u)\).  Let \(\zeta=\exp(2\pi \rm{i} /3)\), a third root of unity. In \cite{emo}, we gave a 3-variable formula giving
  \begin{equation*}
    \frac{\sigma(u+v+w)\sigma(u+{\zeta}v+{\zeta^2}w) 
      \sigma(u+{\zeta}^2v+{\zeta}w)}
    {\sigma(u)^3\sigma(v)^3\sigma(w)^3}
  \end{equation*}
as a polynomial of \(\wp(u)\), \(\wp(v)\), \(\wp(w)\), and their first order derivatives.  
  
Thus it is reasonable to consider a naive generalisation of this in our setting, namely and expression for
  \begin{equation*}
    \frac{\sigma(u+v+w)\sigma(u+v^{\star}+w^{\star\star})
      \sigma(u+v^{\star\star}+w^{\star})}
    {\sigma(u)^3\sigma(v)\sigma(v^{\star})\sigma(v^{\star\star})
      \sigma(w)\sigma(w^{\star})\sigma(w^{\star\star})}.
  \end{equation*}
However, we find this is no longer a periodic function with respect to  \(\Lambda\), 
as may be checked by the translational formula (\ref{translational}).  
If we increase \(v\) to \(v+\ell\) (and similarly for $w$), the factors which appear in (\ref{translational}) do not cancel out.
\item Our results are likely to generalise to higher genus curves.  
For example, the natural analogue for Theorem \ref{main_thm} for the curve
  \begin{equation*}
    y^2+(\mu_1x^2+\mu_3x+\mu_5)y
    = x^5+\mu_2x^4+\mu_4x^3+\mu_6x^2+\mu_8x+\mu_{10}
  \end{equation*}
could be obtained by considering five roots of \(x\) for a fixed \(y\).  
\end{enumerate}

\section*{Acknowledgements}
This work began following the presentation
``Frobenius-Stickelberger type formulae for general curves'' by Y\^O
at the 2010 ICMS conference entitled ``The higher-genus sigma
function and applications''.  It also follows the work in \cite{o11}
and \cite{emo}.  

The material in this paper was mainly derived when JCE and ME visited
Y\^O at the University of Yamanashi in Spring 2012, supported by JSPS
grant no.22540006.  

The authors acknowledge the anonymous referees whose comments improved the paper.  
In particular we thank the referee who pointed out the extension of Theorem \ref{main_gen} to Theorem \ref{main_gen2}.



\end{document}